\theoremstyle{plain}
\newtheorem{theorem}{Theorem}[section]
\newtheorem{proposition}{Proposition}[section]
\newtheorem{lemma}{Lemma}[section]
\newtheorem{corollary}{Corollary}[section]
\theoremstyle{definition}
\newtheorem{definition}{Definition}[section]
\newcommand{\neva}{N}
\newcommand{\bneva}{\mathfrak{N}}
\newcommand{\aream}{A}
\newcommand{\si}{\sigma}
\newcommand{\diin}{\Theta}
\newcommand{\er}{\varepsilon}
\newcommand{\za}{\zeta}
\newcommand{\ph}{\varphi}
\newcommand{\cph}{C_\varphi}
\newcommand{\hol}{\mathcal{H}ol}
\newcommand{\Dbb}{\mathbb D}
\newcommand{\Tbb}{\mathbb T}
\newcommand{\spd}{S_d}
\newcommand{\bd}{B_d}
\newcommand{\knl}{k}
\newcommand{\Nbb}{\mathbb N}
\numberwithin{equation}{section}
\begin{document}

\date{}

\title[Composition operators on model spaces]{Compact composition operators on model spaces}

\author{Evgueni Doubtsov}
\address{St.~Petersburg Department
of Steklov Mathematical Institute, Fontanka 27, St.~Petersburg 191023, Russia}

\email{dubtsov@pdmi.ras.ru}

\thanks{This research was supported by the Russian Science Foundation (grant No.~23-11-00171), https://rscf.ru/project/23-11-00171/.}

\begin{abstract}
Let $\varphi: B_d\to\mathbb{D}$, $d\ge 1$, be a holomorphic function,
where $B_d$ denotes the open unit ball of $\mathbb{C}^d$ and $\mathbb{D}= B_1$.
Let $\Theta: \mathbb{D} \to \mathbb{D}$ be an inner function
and let $K^p_\Theta$ denote the corresponding model space.
For $p>1$, we characterize the compact composition operators 
$C_\varphi: K^p_\Theta \to H^p(B_d)$, where $H^p(B_d)$ denotes the Hardy space.
\end{abstract}

\keywords{Model spaces, composition operator, Nevanlinna counting function, real interpolation of Banach spaces.}

\maketitle

\section{Introduction}

Let $\bd$ denote the open unit ball of $\mathbb{C}^d$, $d\ge 1$,
and let $\spd$ denote the unit sphere.
Let $\si=\si_d$ denote the normalized Lebesgue measure on the sphere $\spd$.
We also use symbols $\Dbb$ and $\Tbb$ for the unit disc $B_1$ and the unit circle $\partial B_1$, respectively.

For $d\ge 1$, let $\hol(\bd)$ denote the space of holomorphic functions in $\bd$.
For $0<p<\infty$, the classical Hardy space $H^p=H^p(\bd)$ consists of those $f\in \hol(\bd)$ for which
\[
\|f\|_{H^p}^p = \sup_{0<r<1} \int_{\spd} |f(r\za)|^p\, d\si_d(\za) < \infty.
\]
As usual, we identify the Hardy space $H^p(\bd)$, $p>0$, and the space
$H^p(\spd)$ of the corresponding boundary values.

%In this section, we assume that $\ph: \bd\to\Dbb$, $d\ge 1$, is an arbitrary holomorphic function.
It is well known that the composition operator $\cph: f\mapsto f\circ\ph$ sends $H^p(\Dbb)$
into $H^p(\bd)$, $p>0$. Indeed, let $f\in H^p(\Dbb)$. Then $|f|^p \le h$ for an appropriate harmonic
function $h$ on $\Dbb$. So, $|f\circ \ph|^p \le h\circ\ph$, hence $f\circ\ph \in H^p(\bd)$,
as required.

Since $\cph$ maps $H^2(\Dbb)$ into $H^2(\bd)$, it is natural to ask for a characterization of those $\ph$ for which
$\cph: H^2(\Dbb)\to H^2(\bd)$ is a compact operator.
Two-sided estimates for the essential norm of the operator $\cph: H^2(\Dbb) \to H^2(\bd)$, $d\ge 1$,
were obtained by B.R.~Choe \cite{Ch92} in terms of the corresponding pull-back measure.
A more explicit approach based on the Nevanlinna counting function was proposed in \cite{ShJoel87}
for $d=1$; see also \cite{AD20} for an extension to the case $d\ge 1$.

\begin{definition}\label{d_inner}
A
%non-constant
holomorphic function $\diin:\Dbb \to \Dbb$ is called \textsl{inner}
if $|\diin(\za)|=1$ for $\si_1$-a.e. $\za\in\Tbb$.
\end{definition}

In the above definition,
$\diin(\za)$ stands, as usual, for
$\lim_{r\to 1-} \diin(r\za)$.
Recall that the corresponding limit is known to exist $\si_1$-a.e.
Also, by the above definition, unimodular constants are not inner functions.

Given an inner function $\diin$ on $\Dbb$, the classical
model space $K_\diin$ is defined as
\[
K_\diin = H^2(\Dbb)\ominus \diin H^2(\Dbb).
\]
In this paper, firstly, we characterize those $\ph$ for which
$\cph: K_\diin \to H^2(\bd)$ is a compact operator.
For $d=1$, such characterizations were earlier obtained in \cite{LM13}.
Secondly, in Theorem~\ref{t_comp_KTp1}, we solve the analogous problem for $\cph: K_\diin^p \to H^p(\bd)$, $p>1$,
where $K_\diin^p := H^p \cap \diin \overline{H^p}$.
Note that $K_\diin^2 = K_\diin$.

\subsection*{Organization of the paper}
Auxiliary results, including Cohn's inequality and Stanton's formula, are presented in Section~\ref{s_aux}.
Compact composition operators $\cph: K_\diin \to H^2(\bd)$ are characterized in Section~\ref{s_KT}.
Real interpolation of Banach spaces is used in Section~\ref{s_KTp} to prove
that the compactness of $\cph: K_\diin^p \to H^p(\bd)$ does not depend on $p$ for $1<p<\infty$.

\section{Auxiliary results}\label{s_aux}
\subsection{Littlewood--Paley identity and related results}\label{ss_HL}

Given an $f\in H^2(\Dbb)$, the
Littlewood--Paley identity states that
\begin{equation}\label{e_LP}
\|f\|^2_{H^2(\Dbb)} = |f(0)|^2 + 2\int_{\Dbb} |f^\prime(w)|^2 \log\frac{1}{|w|}\, d\aream(w),
\end{equation}
where $\aream$ denotes the normalized area measure on $\Dbb$.

\subsubsection{Cohn's inequality}
Let $\diin: \Dbb \to \Dbb$ be an inner function.
If $f\in K_\diin$, then the lower estimate in \eqref{e_LP} is improvable
in the sense of the following theorem.

\begin{theorem}[\cite{Cn86}]\label{t_cohn}
Let $\diin$ be an inner function.
There exists $p\in (0,1)$ such that 
\begin{equation}\label{e_Cohn}
\|f\|^2_{H^2(\Dbb)} \ge |f(0)|^2 + C_p \int_{\Dbb} \frac{|f^\prime(w)|^2}{(1-|\diin(w)|)^p} \log\frac{1}{|w|}\, d\aream(w)
\end{equation}
for all $f\in K_\diin$.
\end{theorem}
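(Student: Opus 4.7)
The plan is to reduce \eqref{e_Cohn} via the Littlewood--Paley identity \eqref{e_LP} to the Carleson-type estimate
\[
\int_\Dbb \frac{|f'(w)|^2}{(1-|\diin(w)|)^p}\log\frac{1}{|w|}\,d\aream(w) \le C_p\|f\|^2_{H^2(\Dbb)}, \qquad f\in K_\diin.
\]
First I would exploit the defining property $f\in K_\diin \iff \bar\diin f \in \overline{zH^2}$ to rewrite the reproducing-kernel representation $f(w)=\int_\Tbb f(\za)\overline{k^\diin_w(\za)}\,d\si_1(\za)$, with $k^\diin_w(z)=(1-\overline{\diin(w)}\diin(z))/(1-\bar w z)$, and then differentiate in $w$. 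A key simplification is that the Szeg\H o projection of $\bar\diin f$ vanishes for $f\in K_\diin$, which suppresses the term carrying $\diin'(w)$ and leaves the cleaner integral representation
\[
f'(w) = \int_\Tbb \frac{\bar\za\, f(\za)\,(1-\diin(w)\overline{\diin(\za)})}{(1-w\bar\za)^2}\,d\si_1(\za).
\]

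The next step is to apply a weighted Cauchy--Schwarz inequality to this representation, distributing the factor $1-\diin(w)\overline{\diin(\za)}$ so as to produce a pointwise bound of the form
\[
|f'(w)|^2 \;\le\; \|f\|^2_{H^2(\Dbb)}\cdot \frac{C\,(1-|\diin(w)|)^\alpha}{(1-|w|)^\beta}
\]
for a suitable pair of exponents $(\alpha,\beta)$. Using $|\diin(\za)|=1$ on $\Tbb$ together with the trivial $|1-\diin(w)\overline{\diin(\za)}|\le 1+|\diin(w)|$ on one hand, and the averaged identity $\int_\Tbb |1-\diin(w)\overline{\diin(\za)}|^2 d\si_1(\za) \asymp 1-|\diin(w)|^2$ (modulo a lower-order term involving $\diin(0)$) on the other, should yield values of $\alpha,\beta$ good enough to proceed.

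Feeding the pointwise estimate back into the target integral, convergence reduces to estimating
\[
\int_\Dbb \frac{(1-|\diin(w)|)^{\alpha-p}\log(1/|w|)}{(1-|w|)^\beta}\,d\aream(w),
\]
which I would control by a layer-cake decomposition over the sub-level sets $\{|\diin|<1-t\}$ of the inner function, using Frostman-type area bounds on these sets. Choosing $p\in(0,1)$ small enough that $\alpha-p$ dominates the growth forced by $\beta$, this integral becomes finite with a constant $C_p$ independent of $f$, and \eqref{e_Cohn} follows by subtracting the resulting bound from the Littlewood--Paley identity.

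The main obstacle I anticipate is that a purely pointwise Cauchy--Schwarz bound tends to lose the cancellation specific to model spaces, so the exponents $(\alpha,\beta)$ produced in this way may be too weak to close the argument. In that case one must avoid resolving the Cauchy integral pointwise and instead test the measure $d\mu_p(w)=\log(1/|w|)(1-|\diin(w)|)^{-p}d\aream(w)$ against the kernels $k^\diin_w$ themselves, reducing the problem to a genuine Carleson-embedding criterion for $K_\diin$ where the inner structure of $\diin$ enters through the size of $|1-\overline{\diin(w)}\diin(z)|$ rather than through pointwise majorization of $|f'|$.
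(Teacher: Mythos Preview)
The paper does not prove this theorem; it is quoted from Cohn \cite{Cn86} as an auxiliary tool, so there is no in-paper argument to compare against. Your reduction via \eqref{e_LP} to the bound
\[
\int_\Dbb \frac{|f'(w)|^2}{(1-|\diin(w)|)^{p}}\log\frac{1}{|w|}\,d\aream(w)\le C\|f\|_{H^2}^2,\qquad f\in K_\diin,
\]
is correct (and this is precisely the form in which the paper later applies the theorem), and your integral formula for $f'(w)$, with the $\diin'(w)$ term removed via $P_+(\bar\diin f)=0$, is also correct.

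Plan~A, however, contains a concrete error. The claimed asymptotic $\int_\Tbb |1-\diin(w)\overline{\diin(\za)}|^2\,d\si_1(\za)\asymp 1-|\diin(w)|^2$ is false: since $|\diin(\za)|=1$ a.e., a direct expansion gives
\[
\int_\Tbb |1-\diin(w)\overline{\diin(\za)}|^2\,d\si_1(\za)
=1+|\diin(w)|^2-2\,\Rl\bigl(\diin(w)\overline{\diin(0)}\bigr),
\]
which is comparable to $1$, not to $1-|\diin(w)|^2$. (The factor $1-|\diin(w)|^2$ in $\|\knl_w\|^2$ arises from cancellation against the Cauchy kernel in $\int_\Tbb |\knl_w(\za)|^2\,d\si_1(\za)$, not from the size of $|1-\diin(w)\overline{\diin(\za)}|$ alone.) With this corrected, the pointwise Cauchy--Schwarz bound produces no positive power of $1-|\diin(w)|$, so the layer-cake step has nothing to work with and Plan~A fails outright, as you half-anticipated.

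Plan~B is the right direction and matches Cohn's original approach via Carleson measures for $K_\diin$, but what you have written is only a signpost: the substance of Cohn's proof is exactly the verification that the measure $(1-|\diin(w)|)^{-p}\log\frac{1}{|w|}\,d\aream(w)$ satisfies the $K_\diin$-Carleson condition for some $p\in(0,1)$, and this requires real work with the structure of $\diin$, not merely testing on the kernels $\knl_w$.
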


\subsubsection{Stanton's formula}
To study the composition operator generated by a holomorphic self-map $\phi$ of the unit disk,
J.~H.~Shapiro \cite{ShJoel87} used for $f\circ\phi$ an analog of \eqref{e_LP}.
This analog is based on the Nevanlinna counting function $\neva_\phi$ defined as
\[
\neva_{\phi}(w) = \sum_{z\in \Dbb:\, \phi(z)=w} \log\frac{1}{|z|}, \quad w\in\Dbb\setminus\{\phi(0)\},
\]
where each pre-image is counted according to its multiplicity.
The key technical result in Shapiro's argument is the following Stanton formula. 

\begin{theorem}[\cite{ShJoel87}]\label{t_Stn_disk}
Let $\phi: \Dbb\to\Dbb$ be a holomorphic function.
Then
\begin{equation}\label{e_Stntn}
\|f\circ \phi\|^2_{H^2(\Dbb)} = |f(\phi(0))|^2 + 2\int_{\Dbb} |f^\prime(w)|^2
\neva_{\phi}(w) \, d\aream(w).
\end{equation}
\end{theorem}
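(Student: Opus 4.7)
The plan is to derive Stanton's formula as a direct consequence of the Littlewood--Paley identity \eqref{e_LP} combined with a change of variables that accounts for the non-injectivity of $\phi$.

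First, I would set $g = f \circ \phi$. Since $f \in H^2(\Dbb)$ and $\phi: \Dbb \to \Dbb$ is holomorphic, the introductory argument of the paper already shows $g \in H^2(\Dbb)$, so the Littlewood--Paley identity \eqref{e_LP} applied to $g$ gives
\begin{equation*}
\|f\circ\phi\|^2_{H^2(\Dbb)} = |f(\phi(0))|^2 + 2\int_\Dbb |(f\circ\phi)^\prime(w)|^2 \log\frac{1}{|w|}\, d\aream(w).
\end{equation*}
By the chain rule, $(f\circ\phi)^\prime(w) = f^\prime(\phi(w))\phi^\prime(w)$, so the integrand becomes $|f^\prime(\phi(w))|^2 |\phi^\prime(w)|^2 \log(1/|w|)$.

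Next, I would apply the non-univalent change of variables formula to push this integral forward under $\phi$. For a nonnegative Borel $\psi$ on $\Dbb$, the standard area/coarea formula for holomorphic maps yields
\begin{equation*}
\int_\Dbb \psi(\phi(w)) |\phi^\prime(w)|^2\, d\aream(w) = \int_\Dbb \psi(z) \, \#\phi^{-1}(z)\, d\aream(z),
\end{equation*}
where the preimage count is taken with multiplicity. Applying this with $\psi(z) = |f^\prime(z)|^2$ does not directly work, because the weight $\log(1/|w|)$ depends on $w$ rather than $\phi(w)$. The remedy is to treat the weight as part of the summand: combining the area formula with Tonelli's theorem gives
\begin{equation*}
\int_\Dbb |f^\prime(\phi(w))|^2 |\phi^\prime(w)|^2 \log\frac{1}{|w|}\, d\aream(w) = \int_\Dbb |f^\prime(z)|^2 \Biggl(\sum_{\phi(w)=z} \log\frac{1}{|w|}\Biggr) d\aream(z),
\end{equation*}
and the inner sum is exactly $\neva_\phi(z)$ by definition. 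Substituting back yields \eqref{e_Stntn}.

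The main obstacle is making the change of variables rigorous in the presence of critical points of $\phi$, i.e.\ points where $\phi^\prime=0$. Away from the critical set, $\phi$ is locally biholomorphic, and the formula follows from the standard Jacobian change of variables applied to each branch, together with monotone convergence. The critical set is discrete (since $\phi^\prime$ is holomorphic and not identically zero unless $\phi$ is constant, in which case the statement is trivial), so it has measure zero on both sides and contributes nothing. A clean way to implement this is to exhaust $\Dbb$ by an increasing sequence of open sets on which $\phi$ is locally invertible, apply the univalent change of variables on each piece with a partition of unity, and then pass to the limit using Tonelli to exchange the integral and the sum over preimages. Once this is justified, the identification with $\neva_\phi$ is immediate.
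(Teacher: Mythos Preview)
The paper does not give its own proof of Theorem~\ref{t_Stn_disk}; the result is simply quoted from Shapiro~\cite{ShJoel87} as the classical Stanton formula. Your sketch is correct and is essentially the standard argument used in~\cite{ShJoel87}: apply the Littlewood--Paley identity~\eqref{e_LP} to $f\circ\phi$, use the chain rule, and then perform the non-univalent change of variables $z=\phi(w)$, which collapses the weight $\log(1/|w|)$ over the fibers of $\phi$ into $\neva_\phi(z)$.
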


Given an $f\in \hol(\bd)$ and a point $\za\in\spd$,
the slice-function $f_\za\in\hol(\Dbb)$ is defined by $f_\za(\lambda) = f(\lambda\za)$, $\lambda\in\Dbb$.

\begin{corollary}\label{c_Stanton}
Let $\ph: \bd\to\Dbb$, $d\ge 1$, be a holomorphic function.
Then
\begin{equation}\label{e_Stntn_d}
\|f\circ \ph\|^2_{H^2(\bd)} = |f(\ph(0))|^2 + 2\int_{\Dbb} |f^\prime(w)|^2
\left(\int_{\spd} \neva_{\ph_\za}(w)\,  d\si_d(\za) \right)\, d\aream(w).
\end{equation}
\end{corollary}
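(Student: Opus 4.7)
The plan is to reduce to the one-variable Stanton formula~\eqref{e_Stntn} by slice integration. First I would recall the standard slicing identity for the Hardy space of the ball: by rotation invariance of $\si_d$, for any $g\in H^2(\bd)$ one has
\[
\|g\|^2_{H^2(\bd)} = \int_{\spd} \int_{\Tbb} |g(\eta\za)|^2\, d\si_1(\eta)\, d\si_d(\za) = \int_{\spd} \|g_\za\|^2_{H^2(\Dbb)} \, d\si_d(\za),
\]
where $g_\za(\lambda)=g(\lambda\za)$ is the slice function.

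Next I would apply this with $g=f\circ\ph$ and observe that the slice of the composition is the composition of slices, that is, $g_\za(\lambda)=f(\ph(\lambda\za))=(f\circ\ph_\za)(\lambda)$, where $\ph_\za:\Dbb\to\Dbb$ is the slice of $\ph$. For each $\za\in\spd$, Theorem~\ref{t_Stn_disk} applied to $\ph_\za$ yields
\[
\|f\circ\ph_\za\|^2_{H^2(\Dbb)} = |f(\ph_\za(0))|^2 + 2\int_{\Dbb} |f^\prime(w)|^2 \neva_{\ph_\za}(w)\, d\aream(w),
\]
and since $\ph_\za(0)=\ph(0)$ for every $\za\in\spd$, the first term is a constant independent of $\za$.

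Integrating the previous identity over $\za\in\spd$ with respect to $\si_d$ and using Tonelli's theorem (the integrand $|f'(w)|^2 \neva_{\ph_\za}(w)$ is nonnegative and measurable in $(\za,w)$) to exchange the order of integration produces exactly the asserted formula~\eqref{e_Stntn_d}. No step presents a genuine obstacle: the only minor points are to justify measurability in $\za$ of the inner integral, which follows from the explicit sum defining $\neva_{\ph_\za}$ together with the holomorphy of $\ph$, and to note that the set of $\za$ for which $\ph_\za$ is constant contributes trivially on both sides (as $\neva_{\ph_\za}\equiv 0$ and $f\circ\ph_\za\equiv f(\ph(0))$ in that case).
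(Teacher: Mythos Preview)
Your proof is correct and follows essentially the same approach as the paper: apply Theorem~\ref{t_Stn_disk} to each slice function $\ph_\za$ and integrate over $\spd$ with respect to $\si_d$. You have simply made explicit the ingredients (the slice-integration identity for $\|\cdot\|_{H^2(\bd)}$, the observation $\ph_\za(0)=\ph(0)$, and the use of Tonelli) that the paper leaves to the reader.
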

\begin{proof}
Let $\za\in \spd$.
Applying Theorem~\ref{t_Stn_disk} with $\phi = \ph_\za$,
and integrating with respect to the normalized Lebesgue measure $\si_d$
on $\spd$, we obtain \eqref{e_Stntn_d}.
\end{proof}

\subsection{Subharmonicity inequality for the Nevanlinna counting function}

\begin{proposition}[{\cite[Section 4.6]{ShJoel87}}]\label{p_subharm}
Let $w\in\Dbb$ and $\phi: \Dbb\to\Dbb$ be a holomorphic function.
Suppose that $\Delta$ is a disk centered at $w$ and such that $\phi(0)\notin \Delta$.
Then
\begin{equation}\label{e_subharm_1}
\neva_{\phi}(w) \le \frac{1}{\aream(\Delta)}\int_{\Delta} \neva_\phi(z)\, d\aream(z).
\end{equation}
\end{proposition}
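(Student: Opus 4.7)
My plan is to derive a Jensen-type representation for $\neva_\phi$ on $\Dbb\setminus\{\phi(0)\}$ and then reduce the desired sub-mean value inequality to the classical subharmonicity of the logarithmic distance.

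First I would apply Jensen's formula to the holomorphic function $\phi - w$ on the disk $\{|z|<r\}$ for $r<1$, where $w\in\Delta$ is fixed. Since $\phi\in H^\infty(\Dbb)$ and $\phi-w$ does not vanish identically (because $\phi(0)\neq w$), passing to the limit $r\to 1^-$ gives the identity
\[
\neva_\phi(w) = \frac{1}{2\pi}\int_0^{2\pi}\log|\phi(e^{i\theta}) - w|\, d\theta - \log|\phi(0) - w|
\]
for every $w\in\Dbb\setminus\{\phi(0)\}$.

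Next I would average this identity over $\Delta$ and exchange the order of integration via Fubini. For each fixed $\theta$ the function $z\mapsto \log|\phi(e^{i\theta}) - z|$ is subharmonic on $\Cbb$, so the sub-mean value inequality yields
\[
\frac{1}{\aream(\Delta)}\int_\Delta \log|\phi(e^{i\theta}) - z|\, d\aream(z) \ge \log|\phi(e^{i\theta}) - w|.
\]
On the other hand, since $\phi(0)\notin\Delta$, the function $z\mapsto \log|\phi(0) - z|$ is harmonic on $\Delta$, so the mean value equality holds:
\[
\frac{1}{\aream(\Delta)}\int_\Delta \log|\phi(0) - z|\, d\aream(z) = \log|\phi(0) - w|.
\]
Subtracting the second line from the $\theta$-integral of the first and invoking the Jensen representation on both sides yields exactly \eqref{e_subharm_1}.

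The main technical point is the passage $r\to 1^-$ in Jensen's formula, i.e., showing that $\int_0^{2\pi}\log|\phi(re^{i\theta}) - w|\, d\theta \to \int_0^{2\pi}\log|\phi(e^{i\theta}) - w|\, d\theta$; this is standard for $H^\infty$ functions not identically zero, and one must also verify integrability to legitimize Fubini, which follows from the uniform upper bound $\log|\phi(e^{i\theta})-z|\le \log 2$ and the integrable logarithmic singularity in the $z$ variable. A minor issue is the degenerate case $\phi(0)\in\partial\Delta$: then $\log|\phi(0)-\cdot|$ is still harmonic on the open disk $\Delta$ with an integrable boundary singularity, and the mean value identity follows by exhaustion through concentric subdisks.
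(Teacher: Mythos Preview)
Your overall strategy---Jensen's formula plus the subharmonicity of $z\mapsto\log|\alpha-z|$---is exactly the idea behind Shapiro's argument, and the paper itself gives no proof but simply cites \cite[Section~4.6]{ShJoel87}. However, there is a genuine gap in the step you flag as ``standard''.

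The identity
\[
\neva_\phi(w)=\frac{1}{2\pi}\int_0^{2\pi}\log|\phi(e^{i\theta})-w|\,d\theta-\log|\phi(0)-w|
\]
is \emph{false} in general: the inner--outer factorization of $\phi-w$ may carry a nontrivial singular inner factor $S_w$, and then the right-hand side equals $\neva_\phi(w)+\mu_w(\Tbb)$, where $\mu_w\ge 0$ is the associated singular measure. Equivalently, the convergence $\int\log|\phi(re^{i\theta})-w|\,d\theta\to\int\log|\phi(e^{i\theta})-w|\,d\theta$ you invoke fails exactly when $\mu_w\neq 0$. A concrete instance: take $\phi(z)=\exp\bigl(-\tfrac{1+z}{1-z}\bigr)$ and $w=0$; then $\neva_\phi(0)=0$, but your right-hand side equals $-\log|\phi(0)|=1>0$. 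With the extra term present, averaging gives the sub-mean-value inequality for $\neva_\phi(\cdot)+\mu_{(\cdot)}(\Tbb)$, not for $\neva_\phi$ alone.

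The fix is to postpone the limit: for each fixed $r<1$, Jensen's formula on $\{|z|<r\}$ gives the \emph{exact} identity
\[
\neva_\phi(r,w):=\sum_{\substack{\phi(z)=w\\ |z|<r}}\log\frac{r}{|z|}
=\frac{1}{2\pi}\int_0^{2\pi}\log|\phi(re^{i\theta})-w|\,d\theta-\log|\phi(0)-w|,
\]
and your subharmonic/harmonic splitting now applies verbatim to show $\neva_\phi(r,\cdot)$ satisfies the sub-mean-value inequality on $\Delta$. Since $r\mapsto \neva_\phi(r,w)$ is increasing with limit $\neva_\phi(w)$, monotone convergence then yields \eqref{e_subharm_1}. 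This is precisely Shapiro's route.
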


\begin{corollary}\label{c_subharm}
Let $w\in\Dbb$ and $\ph: \bd\to\Dbb$ be a holomorphic function.
Suppose that $\Delta$ is a disk centered at $w$ and such that $\ph(0)\notin \Delta$.
Then
\begin{equation}\label{e_subharm}
\int_{\spd} \neva_{\ph_\za}(w)\,d\si_d(\za) \le \frac{1}{\aream(\Delta)}\int_{\Delta} 
\left(\int_{\spd} \neva_{\ph_\za}(z)\,  d\si_d(\za) \right)\, d\aream(z).
\end{equation}
\end{corollary}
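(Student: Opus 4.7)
The plan is to reduce the statement directly to Proposition \ref{p_subharm} applied slice by slice, and then to invoke Fubini's theorem (or Tonelli, since everything is nonnegative) to interchange the integrals over $\Delta$ and $\spd$.

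The crucial observation is that for every $\za\in\spd$, the slice function $\ph_\za:\Dbb\to\Dbb$ is holomorphic and satisfies $\ph_\za(0)=\ph(0\cdot\za)=\ph(0)$. Hence the hypothesis $\ph(0)\notin \Delta$ means that $\ph_\za(0)\notin\Delta$ uniformly in $\za$. So Proposition \ref{p_subharm} applies to each $\ph_\za$ with the same disk $\Delta$, yielding
\[
\neva_{\ph_\za}(w) \le \frac{1}{\aream(\Delta)}\int_{\Delta} \neva_{\ph_\za}(z)\, d\aream(z) \qquad (\za\in\spd).
\]

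Next I would integrate both sides with respect to $d\si_d(\za)$ over $\spd$. The Nevanlinna counting function is nonnegative, so the double integral on the right is well-defined in $[0,\infty]$, and Tonelli's theorem permits interchanging the order of integration to rewrite the right-hand side as
\[
\frac{1}{\aream(\Delta)}\int_{\Delta}\left(\int_{\spd}\neva_{\ph_\za}(z)\,d\si_d(\za)\right)d\aream(z),
\]
which is exactly \eqref{e_subharm}.

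There is no real obstacle here: since $\ph(0)$ is a single point independent of the slice direction $\za$, the slicewise application of Proposition \ref{p_subharm} is uniform, and the rest is just Tonelli. The only cosmetic point worth mentioning in the write-up is that nonnegativity of $\neva_{\ph_\za}$ is what makes the interchange of integrals legitimate without any integrability assumption.
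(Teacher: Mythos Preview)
Your proposal is correct and follows exactly the paper's own argument: apply Proposition~\ref{p_subharm} slicewise to $\ph_\za$ (noting $\ph_\za(0)=\ph(0)\notin\Delta$), integrate over $\spd$, and swap the integrals. The only difference is cosmetic---you invoke Tonelli by nonnegativity where the paper says ``Fubini.''
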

\begin{proof}
Let $\za\in \spd$.
To obtain \eqref{e_subharm},
we apply Proposition~\ref{p_subharm} with $\phi = \ph_\za$,
integrate with respect $\si_d$, and use Fubini's Theorem.
\end{proof}

\subsection{Reproducing kernels for $K_\diin$}

Recall that the reproducing kernel $\knl_w(z)$ for $K_\diin$ is given by
\[
\knl_w(z) = \frac{1-\diin(z)\overline{\diin}(w)}{1- z\overline{w}}, \quad
\|\knl_w\|^2 = \frac{1-|\diin(w)|^2}{1-|w|^2}.
\]

Let $D_\er(w) = \{z\in\Dbb: |z-w| < \er|1- z\overline{w}|\}$, that is,
let $D_\er(w)$ denote the pseudohyperbolic $\er$-disk centered at $w\in \Dbb$.

\begin{lemma}[{\cite[Lemma~1]{LM13}}]\label{l_LM}
Let $\{w_n\}\subset\Dbb$ be such that $|w_n|\to 1$ and
\begin{equation}\label{e_LM}
|\diin (w_n)| < a
\end{equation}
for some $a\in (0,1)$. Then
\begin{itemize}
\item[(i)] $\knl_{w_n}/\|\knl_{w_n}\| \overset{w^\ast}\longrightarrow 0$ as $n\to \infty$;
\item[(ii)] there exist $\er>0$, $C>0$, and $n_0\in\Nbb$ such that
\[
|\knl_{w_n}^\prime(z)| \ge \frac{C}{(1-|w_n|^2)^2}, \quad z\in D_\er(w_n),
\]
\end{itemize}
for all $n\ge n_0$.
%where $D_\er(w)$ 
%$= \{z\in\Dbb: |z-w| < \er|1- z\overline{w}|\}$  is a hyperbolic $\er$-disk.
\end{lemma}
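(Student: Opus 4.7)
For (i), since $K_\diin$ is a Hilbert space, weak-$^\ast$ convergence coincides with weak convergence, and the normalized sequence is norm-bounded, so it suffices to test against a dense family. The reproducing kernels $\{\knl_z: z\in \Dbb\}$ span a dense subspace of $K_\diin$, and the reproducing property gives $\langle \knl_z, \knl_{w_n}/\|\knl_{w_n}\|\rangle = \knl_z(w_n)/\|\knl_{w_n}\|$. For fixed $z\in\Dbb$ the numerator stays bounded since $|1-w_n\bar z|\ge 1-|z|$, while the hypothesis $|\diin(w_n)|<a$ yields
\[
\|\knl_{w_n}\|^{2} = \frac{1-|\diin(w_n)|^{2}}{1-|w_n|^{2}}\ge \frac{1-a^{2}}{1-|w_n|^{2}}\longrightarrow \infty,
\]
so the inner product tends to $0$ and (i) follows.

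For (ii), the plan is to write $\knl_w'(z) = h_w(z)/(1-z\bar w)^{2}$, where
\[
h_w(z) := \bar w\bigl(1 - \diin(z)\overline{\diin(w)}\bigr) - \diin'(z)\overline{\diin(w)}(1-z\bar w),
\]
and to prove that $|h_w|$ admits a positive uniform lower bound on a sufficiently small pseudohyperbolic disk about $w$. At the center itself, the Schwarz--Pick inequality $|\diin'(w)|(1-|w|^{2})\le 1-|\diin(w)|^{2}$ implies
\[
|h_w(w)|\ge \bigl(|w|-|\diin(w)|\bigr)\bigl(1-|\diin(w)|^{2}\bigr),
\]
which, once $|w_n|$ is close enough to $1$ and $|\diin(w_n)|<a$, is bounded below by a constant $C_0 = C_0(a) > 0$ uniformly in $n$.

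To extend this lower bound to the whole disk $D_\er(w_n)$, I would pass to the variable $\zeta$ through the disk involution $z=(w-\zeta)/(1-\bar w\zeta)$, which identifies $\er\Dbb$ with $D_\er(w)$, and study $H_w(\zeta):= h_w\bigl((w-\zeta)/(1-\bar w\zeta)\bigr)$. The standard pseudohyperbolic comparisons $|1-z\bar w|\asymp 1-|w|^{2}$ and $1-|z|^{2}\asymp 1-|w|^{2}$ on $D_\er(w)$, combined with Schwarz--Pick applied to $\diin$ and the bound $|\diin(z)|\le (a+\er)/(1+a\er)$ valid on $D_\er(w)$, lead to a uniform upper bound $|H_w(\zeta)|\le M = M(a,\er)$ on $\er\Dbb$, independent of $n$. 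The Schwarz lemma applied to $H_w(\zeta) - H_w(0)$ then yields $|H_w(\zeta)-H_w(0)|\le 2M|\zeta|/\er$, so choosing $\er'$ sufficiently small (depending only on $a$) forces $|h_w(z)|\ge C_0/2$ on $D_{\er'}(w_n)$. Dividing by $|1-z\bar w|^{2}\le (1-|w_n|^{2})^{2}/(1-\er')^{2}$ then produces the required estimate $|\knl_{w_n}'(z)|\ge C/(1-|w_n|^{2})^{2}$.

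The main obstacle will be maintaining uniformity in $n$: the lower bound $C_0$, the upper bound $M$, and hence the final radius $\er'$ must all depend only on $a$, not on the particular $w_n$. The hypothesis $|\diin(w_n)|<a<1$ is precisely what keeps the various Schwarz--Pick constants from degenerating as $|w_n|\to 1$.
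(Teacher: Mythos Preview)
The paper does not give its own proof of this lemma; it is quoted verbatim from \cite[Lemma~1]{LM13} and used as a black box. So there is nothing in the paper to compare your argument against.

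That said, your proposal is a correct self-contained proof. Part~(i) is the standard reproducing-kernel argument and is fine as written. For part~(ii), your decomposition $k_w'(z)=h_w(z)/(1-z\bar w)^2$ is correct, the Schwarz--Pick lower bound $|h_w(w)|\ge (|w|-|\Theta(w)|)(1-|\Theta(w)|^2)$ at the center is valid, and the extension to a small pseudohyperbolic disk via the Schwarz lemma for $H_w(\zeta)-H_w(0)$ works: fix some initial radius (say $\er=1/2$), obtain the uniform bound $M=M(a)$ on $H_w$ there, and then shrink to $\er'$ depending only on $C_0(a)$ and $M(a)$. The final estimate $|1-z\bar w|\le (1-|w|^2)/(1-\er')$ on $D_{\er'}(w)$ follows from $|1-z\bar w|\le (1-|w|^2)+|w|\,|w-z|\le (1-|w|^2)+\er'|1-z\bar w|$, so the division step is justified. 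All constants depend only on $a$, as you note, and the hypothesis $|\Theta(w_n)|<a$ is exactly what prevents both $C_0$ from collapsing and $M$ from blowing up.
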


\section{Compact composition operators on $K_\diin$}\label{s_KT}

\begin{theorem}\label{t_cph_tri}
Let $\ph: \bd\to\Dbb$, $d\ge 1$, be a holomorphic function, and
let $\diin: \Dbb \to \Dbb$ be an inner function.
Then the following properties are equivalent.
\begin{itemize}
  \item [(i)] One has
  \begin{equation}\label{e_nevaTo0}
%\bneva(w) := 
\int_{\spd}\neva_{\ph_\za}(w) \frac{1-|\diin(w)|}{1-|w|}\, d\si_d(\za)\to 0 \quad \textrm{as}\ |w|\to 1-.
\end{equation}
  \item [(ii)] $\cph: K_\diin \to H^2(\bd)$ is a compact operator.
\end{itemize}
\end{theorem}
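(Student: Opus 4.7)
The strategy is to use Stanton's formula (Corollary~\ref{c_Stanton}) as the main analytic bridge, coupling it with Cohn's inequality (Theorem~\ref{t_cohn}) for (i)$\Rightarrow$(ii) and with the reproducing-kernel test (Lemma~\ref{l_LM}) for (ii)$\Rightarrow$(i). For brevity set $\bneva_\ph(w) := \int_{\spd} \neva_{\ph_\za}(w)\, d\si_d(\za)$, so condition~(i) reads $\bneva_\ph(w)(1-|\diin(w)|)/(1-|w|)\to 0$. A key uniform input throughout is the Littlewood-type bound $\bneva_\ph(w)\le M(1-|w|)$ near $\Tbb$, obtained by applying the classical disk inequality $\neva_\phi(w)\le \log|(1-\overline{\phi(0)}w)/(w-\phi(0))|$ slicewise (using $\ph_\za(0)=\ph(0)$) and integrating in $\za$; integrating $\bneva_\ph$ is finite by applying Stanton to the identity function.

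For (i)$\Rightarrow$(ii), let $\{f_n\}\subset K_\diin$ with $\|f_n\|\le 1$ and $f_n\to 0$ weakly. Corollary~\ref{c_Stanton} expresses $\|\cph f_n\|^2 = |f_n(\ph(0))|^2 + 2\int_\Dbb |f_n'|^2 \bneva_\ph\,d\aream$; the first term vanishes since point evaluation is weakly continuous. Split the integral at $|w|=r$: the inner piece tends to zero because $f_n'\to 0$ uniformly on compacta while $\bneva_\ph\in L^1$. On $|w|\ge r$ factor
\[
|f_n'(w)|^2 \bneva_\ph(w) = \frac{|f_n'(w)|^2\log(1/|w|)}{(1-|\diin(w)|)^p}\cdot G(w),\qquad G(w):=\frac{(1-|\diin(w)|)^p\bneva_\ph(w)}{\log(1/|w|)},
\]
and invoke Cohn's inequality with the corresponding $p\in(0,1)$ to absorb the first factor into $\|f_n\|^2/C_p$. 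It remains to prove $\sup_{|w|\ge r}G(w)\to 0$ as $r\to 1^-$. Using $\log(1/|w|)\ge 1-|w|$ this reduces to showing $(1-|\diin(w)|)^p\bneva_\ph(w)/(1-|w|)\to 0$, which I extract from (i) by a two-case split on whether $1-|\diin(w)|$ is below or above a threshold $\eta$: if $1-|\diin|\le \eta$, then $(1-|\diin|)^p\le \eta^p$ is small and the Littlewood bound handles the rest; if $1-|\diin|>\eta$, then hypothesis~(i) itself forces $\bneva_\ph(w)/(1-|w|)\le \eta^{-1}o(1)$.

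For (ii)$\Rightarrow$(i) argue by contradiction. If (i) fails, choose $\delta>0$ and $w_n\in\Dbb$ with $|w_n|\to 1$ and $\bneva_\ph(w_n)(1-|\diin(w_n)|)/(1-|w_n|)\ge \delta$. The Littlewood bound forces $1-|\diin(w_n)|\ge \delta/M$, so Lemma~\ref{l_LM}(i) applies and $\knl_{w_n}/\|\knl_{w_n}\|\to 0$ weakly; compactness then yields $\|\cph \knl_{w_n}\|/\|\knl_{w_n}\|\to 0$. I contradict this by a lower bound: take a Euclidean disk $\Delta_n=B(w_n,c(1-|w_n|))$ with $c$ small enough that $\Delta_n\subset D_\er(w_n)$ and $\ph(0)\notin\Delta_n$, use Lemma~\ref{l_LM}(ii) to get $|\knl_{w_n}'|^2\gtrsim (1-|w_n|)^{-4}$ on $\Delta_n$, and apply Corollary~\ref{c_subharm} to get $\int_{\Delta_n}\bneva_\ph\,d\aream\gtrsim (1-|w_n|)^2\bneva_\ph(w_n)$. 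Stanton's formula then yields
\[
\frac{\|\cph \knl_{w_n}\|^2}{\|\knl_{w_n}\|^2}\gtrsim \frac{\bneva_\ph(w_n)}{(1-|w_n|)(1-|\diin(w_n)|)}\ge \bneva_\ph(w_n)\frac{1-|\diin(w_n)|}{1-|w_n|}\ge \delta,
\]
where the middle inequality uses $(1-|\diin(w_n)|)\le 1$; this contradicts the vanishing above.

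The main obstacle is the sufficiency direction: Cohn's inequality only provides the weight $(1-|\diin|)^p$ with $p<1$, whereas the hypothesis controls the first-power weight. Since $p<1$ means $(1-|\diin|)^p\ge 1-|\diin|$, the hypothesis does not directly dominate the Cohn remainder $G$; the Littlewood-type bound on $\bneva_\ph$ is exactly what enables the two-case reduction and closes the gap.
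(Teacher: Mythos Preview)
Your proposal is correct and follows essentially the same route as the paper: Stanton's formula plus Cohn's inequality for (i)$\Rightarrow$(ii), and the reproducing-kernel test via Lemma~\ref{l_LM} together with the subharmonicity estimate for (ii)$\Rightarrow$(i). The only cosmetic differences are that the paper packages the compactness criterion as finite-rank approximation via the orthogonal projections $P_n$ onto functions vanishing to order $n$ at the origin (rather than the equivalent weak-to-norm criterion you use), and that the paper bridges the gap between the exponent $1$ in the hypothesis and the exponent $p<1$ from Cohn by first passing to $\int_{\spd} \neva_{\ph_\za}^p(w)\,d\si_d(\za)\,(1-|\diin(w)|)^p/(1-|w|)^p\to 0$ and then multiplying back by $\neva_{\ph_\za}^{1-p}\le C(1-|w|)^{1-p}$, whereas you do an equivalent two-case split on the size of $1-|\diin(w)|$; both arguments hinge on the same Littlewood bound $\neva_{\ph_\za}(w)\le C(1-|w|)$ near $\Tbb$. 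You are in fact slightly more explicit than the paper in verifying the hypothesis $|\diin(w_n)|<a$ needed to invoke Lemma~\ref{l_LM}.
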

\begin{proof}
The principal arguments below are modelled after \cite{ShJoel87}.

(i) $\Rightarrow$ (ii)
For $n\in \Nbb$, let
\[
K_{\diin, n}(\Dbb) = \{ \textrm{$f\in K_\diin(\Dbb)$: $f$ has zero of order $n$ at the origin}\}.
\]
Let $P_n: K_{\diin}(\Dbb)\to K_{\diin, n}(\Dbb)$ denote the orthogonal projector.
We claim that
\begin{equation}\label{e_tozero}
\|\cph P_n\|_{K_{\diin}(\Dbb) \to H^2(\bd)}\to 0
\end{equation}
as $n\to \infty$. Then $\cph$ is compact, since it is approximable by the finite-rank operators
$\cph (I-P_n)$.

To verify \eqref{e_tozero}, fix $\er>0$ and $f\in  K_{\diin}(\Dbb)$, $\|f\|\le 1$.
Let $g_n = P_n f$, then $\|g_n\|\le 1$.

Firstly, let $p\in (0,1)$ be that provided by Theorem~\ref{t_cohn}.
By \eqref{e_nevaTo0},
\begin{equation}\label{e_neva_p_to0}
\int_{\spd}\neva^p_{\ph_\za}(w) \frac{(1-|\diin(w)|)^p}{(1-|w|)^p}\, d\si_d(\za)\to 0 \quad \textrm{as}\ |w|\to 1-.
\end{equation}
For $r\in (0,1)$, put
\[
\bneva_{p, r} = \sup_{r< |w| <1} \int_{\spd} \neva_{\ph_\za}(w)\frac{(1-|\diin(w)|)^p}{1-|w|}\, d\si_d(\za).
\]

By Littlewood's inequality \cite{L25},
\[
\neva_{\ph_\za}(w) \le \log\left|\frac{1-\overline{w}\ph(0)}{\ph(0) -w}\right|,\quad w\in\Dbb\setminus \{\ph(0)\}.
\]
Thus, for $\frac{1+|\ph(0)|}{2} < |w| <1$, we obtain $\neva_{\ph_\za}(w) \le C (1-|w|)$, hence,
\[
\neva^{1-p}_{\ph_\za}(w) \le C (1-|w|)^{1-p}, \quad \frac{1+|\ph(0)|}{2} < |w| <1.
\] 
Therefore,
\begin{equation}\label{e_neva_p_only}
\bneva_{p, r}\to 0 \quad \textrm{as}\ \ r\to 1-
\end{equation}
by \eqref{e_neva_p_to0}.
Now, using \eqref{e_neva_p_only} and applying \eqref{e_Cohn} to $g_n \in {K_{\diin}}$, $\|g_n\|\le 1$, choose $R$ so close to $1$
 that
 \begin{equation}\label{e_Rdf}
\int_{\Dbb \setminus R\Dbb} \frac{|g_n^\prime(w)|^2}{(1-|\diin(w)|)^p} \bneva_{p, R} (1-|w|)\, d\aream(w) < 
C \|g_n\| \bneva_{p, R}  < \er
 \end{equation}
 for all $n\in\Nbb$.

Secondly,
\[
\max_{|w|<R}|g^\prime_n(w)|\to 0
\]
as $n\to\infty$.
Thus,
\begin{equation}\label{e_RRDdf}
\int_{R\Dbb} {|g_n^\prime(w)|^2}\int_{\spd} \neva_{\ph_\za}(w)\, d\si_d(\za)\, d\aream(w) < \er
\end{equation}
for all sufficiently large $n$.

By \eqref{e_Stntn_d},
\[
\begin{split}
\frac12  \|\cph g_n\|_{H^2(\bd)}^2
&= \int_{\Dbb} |g_n^\prime(w)|^2  \int_{\spd} \neva_{\ph_\za}(w)\, d\si_d(\za)\, d\aream(w) + |g_n(\ph(0))|^2
\\
&=\int_{\Dbb\setminus R\Dbb} + \int_{R\Dbb} +\ |g_n(\ph(0))|^2.
\end{split}
\]
Observe that $|g_n(\ph(0))|^2  \to 0$ as $n\to \infty$.
Hence, combining \eqref{e_Rdf} and \eqref{e_RRDdf}, we conclude that
\[
\|\cph g_n\|_{H^2(\bd)}\to 0,
\]
as required.

(ii) $\Rightarrow$ (i)
Assume that (i) does not hold.
Then there exists a sequence $\{w_n\}_{n=1}^\infty \subset \Dbb$ such that $|w_n|\to 1$ and
\begin{equation}\label{e_neva_Not0}
%\bneva(w) := 
\int_{\spd}\neva_{\ph_\za}(w_n) \frac{1-|\diin(w_n)|}{1-|w_n|}\, d\si_d(\za)\ge c> 0.
\end{equation}
%By the Littlewood subordination principle, $\neva_{\ph_\za}(w_n) \le C(1-|w_n|)$,
%hence, \eqref{e_neva_Not0} implies \eqref{e_R_small} for some $a\in (0,1)$.
Sequentially applying \eqref{e_Stntn_d}, Lemma~\ref{l_LM}(ii) and Corollary~\ref{c_subharm}, we obtain
\begin{equation}\label{e_rker_estim}
\begin{aligned}
\|C_\ph \knl_{w_n}(z)\|^2/\|\knl_{w_n}\|^2 
    &\ge \frac{1}{\|\knl_{w_n}\|^2} \int_{\Dbb} |\knl^\prime_{w_n}(z)|^2
\left(\int_{\spd} \neva_{\ph_\za}(z)\,  d\si_d(\za) \right)\, d\aream(z)\\
    &\ge \int_{D(w_n, \er)} \frac{C}{(1- |w_n|^2)^{3}}
\left(\int_{\spd} \neva_{\ph_\za}(z)\,  d\si_d(\za) \right)\, d\aream(z)\\
    &\ge \frac{C_\er}{1- |w_n|^2} \int_{\spd} \neva_{\ph_\za}(w_n)\,d\si_d(\za).
\end{aligned}
\end{equation}
Now, recall that $C_\ph$ is a compact operator by (ii), 
thus, $\|C_\ph \knl_{w_n}(z)\|/\|\knl_{w_n}\| \to 0$ by Lemma~\ref{l_LM}(i).
Therefore, \eqref{e_rker_estim} contradicts \eqref{e_neva_Not0}.
The proof of the theorem is finished.
\end{proof}

\section{Compact composition operators on $K_\diin^p$}\label{s_KTp}

For $0<p<\infty$ and an inner function $\diin$, let 
\[
K^p_\diin = K^p_\diin(\Dbb) \overset{\mathrm{def}}= H^p(\Dbb) \cap \diin \overline{H^p}(\Dbb).
\]
It is well known and easy to see that $K^2_\diin = K_\diin$.

By definition, an inner function $\diin: \Dbb \to \Dbb$ is called one-component if
the set $\{z\in\Dbb: |\diin(z)|< r\}$ is connected for some $r\in (0,1)$.
The present section is motivated by the following assertion.

\begin{proposition}[\cite{Ba09, LM13}]\label{p_BaLM}
Let $\phi: \Dbb\to\Dbb$ be a holomorphic function, $p>1$ and let $\diin$ be a one-component inner function.
Then $C_\phi: K^p_\diin \to H^p(\Dbb)$ is a compact operator if and only if
\[
\neva_{\phi}(w) \frac{1-|\diin(w)|}{1-|w|} \to 0 \quad \textrm{as}\ |w|\to 1-.
\]
\end{proposition}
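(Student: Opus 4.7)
My plan is to combine the equivalence already established in Theorem~\ref{t_cph_tri} (specialised to $d=1$) with a real interpolation argument that transfers compactness across the scale $\{K^p_\diin\}_{1<p<\infty}$. The one-component hypothesis enters precisely to guarantee that this scale is well-behaved: it yields the $L^p$ Cohn--Aleksandrov inequality and realises $K^p_\diin$ as a real-interpolation scale. For arbitrary inner functions these tools are not available.

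First, the case $p=2$ is immediate: setting $d=1$ in Theorem~\ref{t_cph_tri} removes the outer integral over $\spd$ and gives the equivalence in the Hilbertian setting, and this step does not invoke the one-component hypothesis. For sufficiency in general, assume the Nevanlinna vanishing condition holds and fix exponents $1<p_0<2<p_1<\infty$ with $p\in(p_0,p_1)$. Aleksandrov's theory for one-component $\diin$ realises $K^p_\diin$ inside the real interpolation space $(K^{p_0}_\diin, K^{p_1}_\diin)_{\theta,p}$ for the appropriate $\theta$. Since $C_\phi$ is compact from $K_\diin$ to $H^2$ by the $p=2$ case, and bounded from $K^{p_i}_\diin$ to $H^{p_i}$ (by boundedness of $C_\phi$ on $H^{p_i}$ combined with boundedness of the Cauchy-type projection onto $K^{p_i}_\diin$, again via the one-component hypothesis), Cwikel's real-interpolation theorem for compact operators transfers compactness to every intermediate $p$.

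For the converse, suppose $C_\phi: K^p_\diin\to H^p$ is compact. Along any sequence $\{w_n\}\subset\Dbb$ with $|w_n|\to 1$ and $|\diin(w_n)|<a<1$, the normalized reproducing kernels $\knl_{w_n}/\|\knl_{w_n}\|_{K^p_\diin}$ tend weakly to zero in $K^p_\diin$, in analogy with Lemma~\ref{l_LM}(i). An $L^p$-Bernstein estimate in the pseudohyperbolic disk $D_\er(w_n)$ (the $L^p$ analog of Lemma~\ref{l_LM}(ii)), combined with an $L^p$ Stanton-type lower bound and the subharmonicity of $\neva_\phi$ from Proposition~\ref{p_subharm}, yields
\[
\frac{\|C_\phi \knl_{w_n}\|_{H^p}^p}{\|\knl_{w_n}\|_{K^p_\diin}^p} \gtrsim \frac{\neva_\phi(w_n)(1-|\diin(w_n)|)}{1-|w_n|}.
\]
Compactness forces the left side to $0$; sequences on which $|\diin(w_n)|\to 1$ are handled by Littlewood's inequality $\neva_\phi(w)\le C(1-|w|)$, exactly as in the proof of Theorem~\ref{t_cph_tri}.

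The main obstacle is the absence of a Littlewood--Paley identity on $H^p$ for $p\ne 2$: it must be replaced by one-sided $L^p$ Cohn-type estimates and their reverse analogs. Here the one-component hypothesis is indispensable, since for general inner $\diin$ the scale $K^p_\diin$ is not known to form a real-interpolation scale and the $L^p$ Cohn inequality can fail. A secondary technical issue is verifying the hypotheses of Cwikel's theorem -- compactness at one endpoint together with boundedness at the other -- which forces the bracketing $p_0<2<p_1$ so that the $p=2$ compactness result from Theorem~\ref{t_cph_tri} can be injected.
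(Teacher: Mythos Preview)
The paper's proof of this proposition is much shorter and structurally different from yours: it simply cites Baranov's results \cite{Ba09} on compact Carleson embeddings of $K^p_\diin$ for one-component $\diin$, which directly yield that compactness of $C_\phi: K^p_\diin\to H^p(\Dbb)$ is independent of $p\in(1,\infty)$. Both implications then reduce to the case $p=2$, which is Theorem~\ref{t_cph_tri} with $d=1$. Your interpolation idea for the sufficiency direction is close in spirit to what the paper does later in Theorem~\ref{t_comp_KTp1}, but your bracketing $p_0<2<p_1$ is incompatible with Cwikel's theorem as stated: Theorem~\ref{t_cmpIntrp_Cw} requires compactness at an \emph{endpoint}, so one must take $2$ as $p_0$ or $p_1$, not an interior point.

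The genuine gap is in your necessity argument. You invoke an ``$L^p$ Stanton-type lower bound'' and an ``$L^p$-Bernstein estimate'' to produce
\[
\frac{\|C_\phi \knl_{w_n}\|_{H^p}^p}{\|\knl_{w_n}\|_{K^p_\diin}^p} \gtrsim \frac{\neva_\phi(w_n)(1-|\diin(w_n)|)}{1-|w_n|},
\]
but Stanton's formula is a Hilbert-space identity coming from Green's theorem; there is no off-the-shelf $L^p$ analogue giving the lower bound you need, and you have not supplied one. The paper never attempts any direct $L^p$ argument: once $p$-independence of compactness is known (via Baranov here, or via Cwikel plus \cite{KZ20} in Theorem~\ref{t_comp_KTp1}), the implication ``compact at $p$ $\Rightarrow$ compact at $2$'' is immediate, and the $p=2$ necessity from Theorem~\ref{t_cph_tri} finishes. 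Finally, your assertion that the one-component hypothesis is indispensable for the interpolation scale is incorrect: the identification $(K_\diin^{p_0}, K_\diin^{p_1})_{\theta,p}=K_\diin^p$ holds for arbitrary inner $\diin$ by \cite{KZ20}, and this is precisely why the paper's Theorem~\ref{t_comp_KTp1} drops the one-component assumption altogether.
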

\begin{proof}
As indicated in \cite[Section~4]{LM13}, the results of Baranov \cite{Ba09} on compact Carleson embeddings
of the model spaces $K^p_\diin$
guarantee that for a \textit{one-component} inner function $\diin$, the compactness of the composition
operator $\cph: K^p_\diin(\Dbb)\to H^p(\Dbb)$ does not depend on $p\in (1, \infty)$.
Finally, for $p=2$, it suffices to apply Theorem~1 from \cite{LM13} or Theorem~\ref{t_cph_tri}.
\end{proof}

In Theorem~\ref{t_comp_KTp1} below, we show that 
the direct analog of Proposition~\ref{p_BaLM} holds for an arbitrary inner function $\diin$.
The arguments are based on 
the real interpolation method for Banach spaces, 
so we first recall 
related basic facts.

Let $(A_0, A_1)$ be a compatible couple of Banach spaces.
Given $0< \theta < 1$ and $1\le q \le \infty$, the real interpolation method
provides $(A_0, A_1)_{\theta, q}$, an interpolation space between $A_0$ and $A_1$
(see, e.g., \cite[Chapter~3]{BL76} for details).

We need the following one-sided compactness theorem for the real interpolation method.

\begin{theorem}[\cite{Cw92}]\label{t_cmpIntrp_Cw}
Let $(A_0, A_1)$ and $(B_0, B_1)$ be compatible couples of Banach spaces.
Assume that $T: A_j \to B_j$, $j=0, 1$, is a bounded linear operator such that 
$T:A_0 \to B_0$ is compact. 
Then $T: (A_0, A_1)_{\theta, q}  \to (B_0, B_1)_{\theta, q}$ is a compact operator
for all admissible $\theta$ and $q$.
\end{theorem}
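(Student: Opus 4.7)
The plan is to work with the Peetre $K$-method realization of the real interpolation spaces. For a compatible couple $(A_0, A_1)$, the $K$-functional
\[
K(t, a; A_0, A_1) = \inf\{\|a_0\|_{A_0} + t\|a_1\|_{A_1}: a = a_0 + a_1,\ a_j\in A_j\}, \quad t>0,
\]
produces an equivalent discrete norm $\|a\|_{(A_0, A_1)_{\theta, q}} \asymp \bigl(\sum_{k\in\mathbb{Z}} (2^{-k\theta} K(2^k, a))^q\bigr)^{1/q}$, with the obvious modification for $q=\infty$, and likewise on $(B_0, B_1)_{\theta, q}$. Boundedness of $T$ between the interpolants is the classical interpolation theorem, so the new content is compactness.

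First I would invoke a standard dyadic atomic decomposition. For each $k\in\mathbb{Z}$, fix an almost-optimal splitting $a = a_0^{(k)} + a_1^{(k)}$ realizing $K(2^k, a)$, and set $u_k = a_0^{(k+1)} - a_0^{(k)} = a_1^{(k)} - a_1^{(k+1)} \in A_0 \cap A_1$. Then $a = \sum_k u_k$ (in $A_0 + A_1$) with
\[
\|u_k\|_{A_0} \le C K(2^k, a), \qquad \|u_k\|_{A_1} \le C\cdot 2^{-k} K(2^k, a).
\]
Given a bounded sequence $\{a_n\}\subset (A_0, A_1)_{\theta, q}$, write $a_n = \sum_k u_k^{(n)}$ accordingly. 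For every fixed $k$, $\{u_k^{(n)}\}_n$ is bounded in $A_0$, so the compactness hypothesis on $T:A_0\to B_0$ together with a diagonal procedure over $k$ yields a single subsequence (not relabelled) along which $\{T u_k^{(n)}\}_n$ is Cauchy in $B_0$ for every $k\in\mathbb{Z}$.

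The main obstacle, and the technical heart of the argument, is to upgrade this coordinatewise $B_0$-Cauchy property to a Cauchy property of $\{Ta_n\}$ in the interpolated target. I would truncate the sum at a level $N$: the head $\sum_{|k|\le N} T(u_k^{(n)} - u_k^{(m)})$ is a finite sum of $B_0$-Cauchy sequences, hence tends to $0$ in $B_0$ and therefore in $(B_0, B_1)_{\theta, q}$, as $m, n\to\infty$, for every fixed $N$. The tail $\sum_{|k|>N}$ must be estimated in the $(B_0, B_1)_{\theta, q}$ norm uniformly in $n, m$ via a single choice of $N$, using the transferred bounds $\|Tu_k\|_{B_0}\le C K(2^k, a)$ and $\|Tu_k\|_{B_1}\le C\cdot 2^{-k} K(2^k, a)$ plugged into the discrete $K$-method norm on $(B_0, B_1)_{\theta, q}$. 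The delicate point is that the weighted sequences $\{2^{-k\theta} K(2^k, a_n)\}_k$ are only known to be bounded in $\ell^q(\mathbb{Z})$, not tight; overcoming this requires a further subsequence extraction ensuring termwise convergence of these weighted sequences, followed by a Fatou- or dominated-convergence-type argument to produce a tail bound that is uniform along the subsequence. Combining the small head and small tail estimates shows that $\{Ta_n\}$ is Cauchy in $(B_0, B_1)_{\theta, q}$, yielding the desired compactness.
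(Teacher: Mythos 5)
The first thing to note is that the paper does not prove Theorem~\ref{t_cmpIntrp_Cw} at all: it is quoted as a black box from Cwikel \cite{Cw92}, precisely because the one-sided compactness hypothesis makes it a genuinely deep result. Your set-up is the standard entry point to it (discrete $K$-method norm, telescoped almost-optimal splittings giving $u_k$ with $\|u_k\|_{A_0}\le CK(2^k,a)$ and $\|u_k\|_{A_1}\le C2^{-k}K(2^k,a)$, transfer through $T$, head/tail splitting, diagonal subsequence), and those steps are essentially correct. But the proposal has a genuine gap exactly at the place you call ``the technical heart,'' and the repair you suggest --- a further subsequence extraction ensuring termwise convergence of the profiles $\{2^{-k\theta}K(2^k,a_n)\}_k$, followed by a Fatou- or dominated-convergence argument --- cannot work. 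Termwise convergence plus boundedness in $\ell^q(\mathbb{Z})$ gives no uniform tail control: that is precisely the absence of tightness, there is no dominating summable envelope, and Fatou's lemma yields lower semicontinuity, which is the wrong direction. Concretely, take $a_n$ whose $K$-profile is an atom at a scale $k_n\to\infty$, i.e.\ $K(2^k,a_n)\approx\min\bigl(2^{k_n\theta},\,2^{k}\,2^{k_n(\theta-1)}\bigr)$, with $\|a_n\|_{(A_0,A_1)_{\theta,q}}\approx 1$. Then for each fixed $k$ the pieces $u_k^{(n)}\to 0$ in $A_0$ as $n\to\infty$, so your diagonal extraction and all termwise limits are vacuous, the heads vanish, and the entire norm sits in a tail escaping past every fixed $N$; your transferred bounds then give only $\|T(a_n-a_m)\|_{(B_0,B_1)_{\theta,q}}\lesssim 1$, uniformly along any subsequence. (A smaller, fixable slip in the head estimate: $B_0$ does \emph{not} embed into $(B_0,B_1)_{\theta,q}$ in general --- e.g.\ $(L^1,L^\infty)_{\theta,q}=L^{p,q}\not\supset L^1$ --- so ``Cauchy in $B_0$, hence in the interpolated target'' needs the inequality $\|b\|_{(B_0,B_1)_{\theta,q}}\le C\|b\|_{B_0}^{1-\theta}\|b\|_{B_1}^{\theta}$ for $b\in B_0\cap B_1$, combined with the $B_1$-boundedness of the fixed-$k$ pieces.)

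What closes the gap in \cite{Cw92} is using the compactness of $T:A_0\to B_0$ \emph{globally across all scales}, not coordinatewise. The key mechanism: if $\|v_n\|_{A_0}\le 1$ and $\|v_n\|_{A_1}\to 0$, then $v_n\to 0$ in $A_0+A_1$, so every $B_0$-cluster point of the precompact sequence $\{Tv_n\}$ must coincide with its $B_0+B_1$ limit, which is $0$; hence $\|Tv_n\|_{B_0}\to 0$. Applied to the normalized elements $v_n=2^{-k_n\theta}a_n$ in the escaping scenario above, this is exactly what kills the tail, and it has no counterpart in your outline. Making this uniform over all dyadic scales simultaneously is the hard part: Cwikel achieves it via the strong form of the fundamental lemma ($K$-divisibility of Brudnyi--Krugljak), which exhibits $(A_0,A_1)_{\theta,q}$ as a retract of a vector-valued weighted $\ell^1$ couple and reduces the theorem to operators on such couples, where a single $\varepsilon$-net for the totally bounded set $T(\mathrm{ball}(A_0))$ serves all coordinates at once. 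Without an ingredient of this kind, your argument establishes only the boundedness conclusion; the step you flagged as delicate is not a technical loose end but the entire content of the theorem.
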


\begin{theorem}\label{t_comp_KTp1}
Let $p>1$ and let $\diin$ be an inner function.
Then $\cph: K^p_\diin \to H^p(\bd)$ is a compact operator if and only if
property \eqref{e_nevaTo0} holds.
\end{theorem}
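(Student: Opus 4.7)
The plan is to reduce the case of arbitrary $p>1$ to the case $p=2$, which has already been handled by Theorem~\ref{t_cph_tri}. The entire strategy hinges on showing that the compactness of $\cph: K^p_\diin \to H^p(\bd)$ does not depend on $p\in(1,\infty)$; once this is established, Theorem~\ref{t_cph_tri} supplies the equivalence with \eqref{e_nevaTo0}. The natural tool is Theorem~\ref{t_cmpIntrp_Cw}, applied to the couples $(K^{p_0}_\diin, K^{p_1}_\diin)$ and $(H^{p_0}(\bd), H^{p_1}(\bd))$ for suitable $p_0,p_1 \in (1,\infty)$ chosen so that both $p$ and $2$ lie in the interpolation interval.

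Two interpolation identities need to be isolated. For the target spaces, I would invoke the classical identity
\[
(H^{p_0}(\bd), H^{p_1}(\bd))_{\theta,p} = H^p(\bd), \qquad \tfrac{1}{p} = \tfrac{1-\theta}{p_0} + \tfrac{\theta}{p_1},
\]
which descends from the corresponding $L^p$ identity on $\spd$ via boundedness of the Cauchy projection. For the domain spaces, I would exploit that, for $1<q<\infty$, $K^q_\diin$ is a complemented subspace of $H^q(\Dbb)$: the operator $P_\diin f = f - \diin P_+(\overline{\diin}\, f)$, where $P_+$ is the Riesz projection, is bounded on $H^q(\Dbb)$, its range is $K^q_\diin$, and its definition is independent of $q$. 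By the general principle that real interpolation commutes with compatible bounded projections, one obtains
\[
(K^{p_0}_\diin, K^{p_1}_\diin)_{\theta,p} = K^p_\diin.
\]

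With both identities at hand, and using that $\cph: K^q_\diin \to H^q(\bd)$ is bounded for every $q\in(1,\infty)$ (since $\cph: H^q(\Dbb)\to H^q(\bd)$ is bounded), Theorem~\ref{t_cmpIntrp_Cw} propagates compactness along the scale. For the \emph{if} direction, given \eqref{e_nevaTo0}, Theorem~\ref{t_cph_tri} yields compactness at $p_0=2$; choosing $p_1\in(1,\infty)$ on the opposite side of $p$ from $2$ and the unique $\theta$ placing $p$ between $p_0$ and $p_1$, Theorem~\ref{t_cmpIntrp_Cw} yields compactness of $\cph: K^p_\diin \to H^p(\bd)$. The \emph{only if} direction is symmetric: interpolating compactness at the given $p$ with boundedness at a carefully chosen companion exponent produces compactness at $2$, and Theorem~\ref{t_cph_tri} then delivers \eqref{e_nevaTo0}.

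The main technical obstacle is the interpolation identity for the domain scale $K^p_\diin$. Although it morally reduces to the $L^p$-boundedness of the Riesz projection for $1<p<\infty$ combined with the abstract principle that real interpolation commutes with compatible bounded projections, both ingredients must be stated precisely and verified to be mutually compatible across the scale; this is also the point at which the hypothesis $p>1$ is essential, since the Riesz projection fails to be bounded at $p=1$.
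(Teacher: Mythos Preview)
Your proposal is correct and follows essentially the same route as the paper: reduce to $p=2$ via Theorem~\ref{t_cph_tri} by showing that compactness of $\cph: K^p_\diin \to H^p(\bd)$ is independent of $p\in(1,\infty)$, and establish this independence by applying Cwikel's one-sided compactness theorem (Theorem~\ref{t_cmpIntrp_Cw}) to the couples $(K^{p_0}_\diin, K^{p_1}_\diin)$ and $(H^{p_0}(\bd), H^{p_1}(\bd))$. The only minor difference is that the paper cites \cite{KZ20} for the identity $(K^{p_0}_\diin, K^{p_1}_\diin)_{\theta,p} = K^p_\diin$, whereas you derive it from the $p$-independent bounded projection $P_\diin$; both justifications are valid.
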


\begin{proof}
By Theorem~\ref{t_cph_tri}, it suffices to show that the compactness of 
$\cph: K^p_\diin(\Dbb)\to H^p(\bd)$ does not depend on $p\in (1, \infty)$.
To prove this property, assume that $p_0\in (1, \infty)$ and $\cph: K^{p_0}_\diin(\Dbb)\to H^{p_0}(\bd)$ is a compact operator.
We are planning to prove that 
$\cph: K^{p}_\diin(\Dbb)\to H^{p}(\bd)$ is also compact for all $p \in (1, \infty) \setminus \{p_0\}$.
In fact, for definiteness and without loss of generality, we may assume that $p> p_0$.

Fix $p$ and $p_1$ such that $p_1 > p > p_0$.
Define $\theta\in (0,1)$ by the following identity:
\begin{equation}\label{e_theta}
\frac{1}{p} = \frac{1-\theta}{p_0} + \frac{\theta}{p_1}.
\end{equation}
On the one hand, \eqref{e_theta} guarantees that
$(K_\diin^{p_0}, K_\diin^{p_1})_{\theta, p} = K_\diin^{p}$ (see, e.g., \cite{KZ20}
for more general results even in the bidisk $\Dbb^2$).
On the other hand, it is known that $(H^{p_0}(\bd), H^{p_1}(\bd))_{\theta, p} = H^{p}(\bd)$,
since $p_1 > p_0 > 1$.
Indeed, application of the classical Riesz projection
reduces interpolation between $H^{p_0}(\bd)$ and $H^{p_1}(\bd)$ to that between $L^{p_0}$ and $L^{p_1}$.

Now, observe that $\cph: K^{p_1}_\diin(\Dbb)\to H^{p_1}(\bd)$ is a bounded operator.
Indeed, as indicated in the introduction, $C_\ph: H^q(\Dbb) \to H^q(\bd)$ is bounded for all $0<q<\infty$.
Thus, applying Theorem~\ref{t_cmpIntrp_Cw} for $q=p$ and the pairs $(K_\diin^{p_0}, K_\diin^{p_1})$
and $(H^{p_0}(\bd), H^{p_1}(\bd))$, we conclude that  
$\cph: K^{p}_\diin(\Dbb)\to H^{p}(\bd)$ is compact, as required.
\end{proof}

\subsection*{Acknowledgements} The author is grateful to  the anonymous referees for
helpful corrections and remarks.

\bibliographystyle{amsplain}
\providecommand{\MR}{\relax\ifhmode\unskip\space\fi MR }

\end{document}